\documentclass[12pt]{amsart}
\usepackage[T1]{fontenc}
\usepackage{amssymb}
\usepackage{bm}
\newtheorem{thm}{Theorem}
\newtheorem{lem}{Lemma}[section]
\newtheorem{prop}{Proposition}[section]

\newtheorem{defi}{Definition}[section]
\textheight=615pt
\textwidth=360pt
\textwidth=14cm
\oddsidemargin=1cm
\evensidemargin=1cm

\numberwithin{equation}{section}
\newcommand{\N}{\mathbb{N}}

\begin{document}

\title[
purely exponential equations with four terms
]
{
Solving an infinite number of purely exponential Diophantine equations \\with four terms}

\author{Takafumi Miyazaki}
\address{Takafumi Miyazaki
\hfill\break\indent Gunma University, Division of Pure and Applied Science,
\hfill\break\indent Faculty of Science and Technology
\hfill\break\indent Tenjin-cho 1-5-1, Kiryu 376-8515.
\hfill\break\indent Japan}
\email{tmiyazaki@gunma-u.ac.jp}
\thanks{The author is supported by JSPS KAKENHI (No. 24K06642).}

\subjclass[2010]{11D61, 11J86, 11J61, 11D79}
\keywords{$S$-unit equation, 
purely exponential equation, Baker's method, non-Archimedean valuation, Skolem's conjecture}

\maketitle

\markleft{Takafumi Miyazaki}
\markright{purely exponential equations with four terms}

\begin{abstract}
An important unsolved problem in Diophantine number theory is to establish a general method to effectively find all solutions to any given $S$-unit equation with at least four terms.
Although there are many works contributing to this problem in literature, most of which handle purely exponential Diophantine equations, it can be said that all of them only solve finitely many equations in a natural distinction.
In this paper, we study infinitely many purely exponential Diophantine equations with four terms of consecutive bases. 
Our result states that all solutions to the equation $n^x+(n+1)^y+(n+2)^z=(n+3)^w$ in positive integers $n,x,y,z,w$ with $n \equiv 3 \pmod{4}$ are given by $(n,x,y,z,w)=(3,3,1,1,2), (3,3,3,3,3)$.
The proof uses elementary congruence arguments developed in the study of ternary case, Baker's method in both rational and $p$-adic cases, and the algorithm of Bert\'ok and Hajdu based on a variant of Skolem's conjecture on purely exponential equations. 
\end{abstract}

\maketitle

\section{Introduction}

$S$-unit equations are an important object in Diophantine number theory.
One of the well-known unsolved problems on them is to establish a general method to effectively find all solutions to any given $S$-unit equation with at least four terms.
In this paper, we contribute to this subject by studying purely exponential Diophantine equations with several terms, which are typical examples of such $S$-nuit equations.

Every $S$-unit equation over the rationals can be expressed by the following purely exponential Diophantine equation:
\begin{equation}\label{pexp}
c_1 {a_{11}}^{x_{11}}\cdots{a_{1\hspace{0.01cm}l_1}}^{x_{1\hspace{0.01cm}l_1}}+
c_2 {a_{21}}^{x_{21}}\cdots{a_{2\hspace{0.01cm}l_2}}^{x_{2\hspace{0.01cm}l_2}}+
\cdots+
c_k {a_{k1}}^{x_{k1}}\cdots{a_{k\hspace{0.01cm}l_k}}^{x_{k\hspace{0.01cm}l_k}}
=0
\end{equation}
with $k \ge 3$ and $l_1 \ge 0, l_2 \ge 1, \ldots, l_k \ge 1$, where each letter using $a,c$ and $x$ denotes a fixed positive integer greater than 1, a fixed nonzero integer, and an unknown positive integer, respectively.
It is well-known that if the number of terms $k$ of \eqref{pexp} equals 3, then the theory of linear forms in logarithms established by Baker gives us an upper bound for all unknown exponents $x_{11},\ldots,x_{3 \hspace{0.01cm}l_3}$ which depends only on the base numbers $a_{11},\ldots,a_{3 \hspace{0.01cm}l_3}$ and coefficients $c_1,c_2,c_3$ and is effectively computable.
While if $k>3$, there is no still found any general method to give such bounds for the unknown exponents, which corresponds to the unsolved problem mentioned at the beginning of this paper.
Below, we will consider the problem of determining the solutions to \eqref{pexp} with several terms, namely, for the case where $k>3$.
For cases which can be reduced to ternary case, or topics on the number of solutions to $S$-unit equations including \eqref{pexp}, see recent papers \cite{MiPi,MiPi2,MiPi3,ScoSt} and monographs \cite[Ch.1]{ShTi} and \cite[Ch.s 4--6]{EvGy}.

A classical well-known work in the direction of several terms is one by Brenner and Foster \cite{BrFo}, who gave a variety of results to solve equations \eqref{pexp} with $k>3$ for small values of the base numbers.
Their main idea is to find congruence restrictions on unknown exponents by reducing a given equation with several moduli.
Similar techniques are used in most of the other related works in the same period (cf.~\cite{AlFo,AlFo2}).
Although the mentioned method using moduli can be used in general, it should be remarked that those moduli have been chosen `accordingly'. 
We mention that Skinner \cite{Sk} gave another treatment of a certain class of equations \eqref{pexp} with the base numbers sharing large factors by using Baker's method in both rational and $p$-adic cases.
Related to this work, Bajpai and Bennett \cite{BaBen} recently showed a general effective treatment of 5-term $S$-unit equations over any number field with the size of $S$, the number of involved places,  extremely small.

On the other hand, Bert\'ok and Hajdu \cite{BerHa} in 2016 presented a systematic treatment on choosing the mentioned moduli. 
Their algorithm is based on a variant of Skolem's conjecture, which roughly asserts that if a purely exponential Diophantine equation has no solution, then it has no solution already modulo some integer (cf.~\cite[pp. 398--399]{Sch}).
Indeed, they succeeded in solving many equations \eqref{pexp} which have been beyond existing methods.
Very recently Dimitrov and Howe \cite{DiHo} used the algorithm to study problems including Erd{\H o}s' conjecture on the representations of powers of $2$ by sum of distinct powers of $3$.
For other applications, see \cite{Ber,MiTe,BerHa2}. 

Although there have been a number of works to determine the solutions to equation \eqref{pexp} as seen above, it can be said that all of them only handle finitely many equations under a natural distinction.
The motivation of this paper is to study an infinite family of equations \eqref{pexp} with several terms in that sense, and solve all in it completely.
We are in position to state the result of this paper.

\begin{thm}\label{th-3456}
Let $n$ be a positive integer with $n \equiv 3 \pmod{4}.$
Then the equation
\[ n^x+(n+1)^y+(n+2)^z=(n+3)^w \]
has no solution in positive integers $x,y,z$ and $w,$ except for $n=3,$ where all solutions are given by $(x,y,z,w)=(3,1,1,2),(3,3,3,3).$
\end{thm}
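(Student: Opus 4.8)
The plan is to run a case analysis driven by elementary congruences, cut the number of free parameters down as far as possible, invoke Baker-type estimates only to eliminate the ``large solution'' scenarios, and close the finitely many residual equations with the Bert\'ok--Hajdu algorithm.

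First I would fix a putative solution and extract the basic restrictions. A trivial size comparison ($n^{x}+(n+1)^{y}+(n+2)^{z}>n+3$) gives $w\ge 2$. Reducing modulo $4$, using $n\equiv 3$, $n+1\equiv 0$, $n+3\equiv 2$ and $w\ge 2$, forces $x$ odd. Reducing modulo $n+1$, with $n\equiv -1$, $n+2\equiv 1$ and $x$ odd, the left side vanishes, so $n+1\mid 2^{w}$; since $4\mid n+1$ this means $n+1=2^{m}$ with $m\ge 2$ and $w\ge m$. Reducing modulo $3$: if $m$ is odd then $n\equiv 1,\ n+1\equiv 2,\ n+2\equiv 0,\ n+3\equiv 1\pmod 3$, and the congruence becomes $1+2^{y}\equiv 1\pmod 3$, which is impossible; hence $m=2\mu$ is even, $3\mid n=4^{\mu}-1$, and a further reduction modulo $3$ gives $1+2^{z}\equiv 0\pmod 3$, i.e.\ $z$ odd. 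Everything thus reduces to solving $(2^{m}-1)^{x}+2^{my}+(2^{m}+1)^{z}=(2^{m}+2)^{w}$ with $m=2\mu\ge 2$, with $x,z$ odd, and $w\ge m$.

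Next I would exploit $2$-adic valuations. By the binomial theorem $(2^{m}-1)^{x}+(2^{m}+1)^{z}=(x+z)2^{m}+O(2^{2m})$, so $v_{2}\!\big((2^{m}-1)^{x}+(2^{m}+1)^{z}\big)=m+v_{2}(x+z)$ whenever $v_{2}(x+z)<m$, while $v_{2}(2^{my})=my$ and $v_{2}\!\big((2^{m}+2)^{w}\big)=w$. If $y=1$ this forces $w=m$, and then reducing modulo $2^{2m}$ yields $2^{m}\mid x+z$; since the elementary inequalities $(2^{m}-1)^{m+2}>(2^{m}+2)^{m}$ and $(2^{m}+1)^{m+1}>(2^{m}+2)^{m}$ give $x\le m+1$ and $z\le m$, we obtain $0<x+z\le 2m+1<2^{m}$ for $m\ge 4$, a contradiction, leaving only $n=3$, where $3^{x}+5^{z}=32$ forces $(x,y,z,w)=(3,1,1,2)$. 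If $y\ge 2$ and $v_{2}(x+z)<m$, the same computation gives $w=m+v_{2}(x+z)$, which together with $x+z=O(w)$ is self-limiting: $w\le m+\log_{2}(2w)+O(1)$, so $w=m+O(\log m)$ and all of $x,y,z$ are $\le m+O(\log m)$, i.e.\ bounded in terms of $m$ alone.

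The remaining danger is $v_{2}(x+z)\ge m$, i.e.\ $x+z\ge 2^{m}$; here the exponents are not yet controlled, and this is where Baker's method enters. In that regime $\max(x,z)\ge 2^{m-1}$, hence $w\ge 2^{m-1}-1$. On the other hand, writing $(2^{m}-1)^{x}+(2^{m}+1)^{z}=(2^{m}+1)^{z}-(1-2^{m})^{x}$ (valid as $x$ is odd), a $p$-adic lower bound for linear forms in two logarithms (Yu) bounds $v_{2}\!\big((2^{m}+1)^{z}-(1-2^{m})^{x}\big)$ by $O\!\big(m^{2}\log\max(x,z)\big)$; after separating off the term $2^{my}$ by an appropriate grouping of the four terms, this valuation governs $w$, so $w=O(m^{2}\log m)$, contradicting $w\ge 2^{m-1}-1$ once $m$ exceeds an explicit bound. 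For the finitely many surviving values of $m$ (and, separately, for $n=3$), an Archimedean estimate for linear forms in logarithms applied to a suitable pair of the four terms makes $x,y,z,w$ effectively bounded. Finally, for $n=3$ and for each $n=4^{\mu}-1$ with $2\le\mu\le\mu_{0}$ --- finitely many equations, each with a priori bounds on the exponents --- I would run the Bert\'ok--Hajdu algorithm: for $n=3$ it returns precisely $(3,1,1,2)$ and $(3,3,3,3)$, and for every $\mu\ge 2$ it certifies the absence of solutions by producing a modulus. The main obstacle is this Baker step: coaxing a genuinely four-term equation into honest two- (or three-) term linear forms in logarithms, both Archimedean and $2$-adic, so that the known effective lower bounds apply and translate into a bound on $\mu$ and on the exponents.
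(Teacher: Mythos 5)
Your elementary reductions (mod $4$, mod $n+1$, mod $3$) match the paper's and correctly force $n+1=2^{m}=4^{\mu}$ with $x,z$ odd, and your $2$-adic bookkeeping $w=m+\nu_2(x+z)$ when $\nu_2(x+z)<m$ is the same congruence information the paper extracts (there: $x+z\equiv 2^{w-2e}\pmod{4^e}$ with $m=2e$). But there is a genuine gap exactly at the point where the paper does its hardest elementary work. In your branch ``$y\ge 2$ and $\nu_2(x+z)<m$'' you only conclude that $w=m+O(\log m)$ and that $x,y,z$ are bounded linearly in $m$; you never derive a contradiction there, and $m$ is unbounded in that branch. Bounding the exponents \emph{in terms of $m$} does not reduce the problem to finitely many equations: for every even $m$ you would still have to exclude solutions with $w<2m$, and neither your Baker step (which only bites when $w\ge 2^{m-1}$, i.e.\ in the branch $\nu_2(x+z)\ge m$) nor the Bert\'ok--Hajdu computation (which can only be run for finitely many $m$) addresses it. The paper closes precisely this branch by Lemma \ref{lem-wge4e}: first $y>1$ together with $y$ odd (proved via mod $N+1$, a step you skip) gives $y\ge 3$; then pushing the binomial expansions one level deeper modulo $4^e$ yields $-x(x-1)+z(z-1)\equiv w\,2^{w-2e}\pmod{4^e}$, size estimates turn this congruence into an equality, and with $x+z=2^{w-2e}$ one gets $(z-x)(z+x-1)=(z+x)w$, which is impossible. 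Some analogue of this argument (or another idea) is indispensable; without it your proof only solves finitely many of the equations.

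A secondary weak point: in the branch $\nu_2(x+z)\ge m$ your claim that the $2$-adic linear-forms bound ``governs $w$'' is too quick, since the grouping $(2^m+2)^w-2^{my}=(2^m+1)^z-(1-2^m)^x$ only gives $\min\{my,w\}\le \nu_2(\cdots)$; if $y$ is small this bounds $y$, not $w$. The paper handles this by running Baker $p$-adically twice ($p=2$ to get $y<w/2$, $p=3$ using $3\mid N-1$ and $6\mid N+2$ to get $x<w/2$), then showing $(N+1)^z$ is close to $(N+2)^w$ and applying an Archimedean two-log bound to $w\log(N+2)-z\log(N+1)$, which contradicts the exponential lower bound $w\gg N$ coming from $x+z\equiv 0\pmod N$. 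Your toolkit is the right one, but as written the chain of implications does not close either the small-$w$ regime or the large-$m$ contradiction.
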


This theorem handles purely exponential Diophantine equations with four terms of consecutive bases, and it is the first result which essentially solves infinitely many equations \eqref{pexp} with several terms.
It might be worth noting that this situation can be regarded as an analogue to the history with $k=3$ that the classical well-known work of Sierpi\'nski \cite{Si}, which solved the equation $3^x+4^y=5^z$, was generalized to the equation $n^x+(n+1)^y=(n+2)^z$ with fixed positive integers $n$ (see \cite{HeTo} and the references therein).

The present paper is organized as follows. 
In the next section, we prepare a number of elementary lemmas giving restrictions on possible solutions of the equation in our theorem.
In particular, we show that its bases numbers have to be of particular forms, and we use methods developed in the study of ternary case to derive a sharp lower bound for a certain quantity composed of some of the unknown exponents.
In Section \ref{sec-3}, we show in the equation that the sizes of the first and second terms of the left-hand side have to be much smaller than that of the term of the right-hand one.
For this purpose, we rely on Baker's method in $p$-adic form with some particular primes $p$.
It turns out that the size of the third term of the left-hand side is very close to that of the term of the right-hand side.
In Section \ref{sec-4}, using this fact together with a usual application of Baker's method in rational case, we obtain a relatively sharp upper bound for all of the exponential unknowns. 
We should mention that our applications of Baker's bounds for linear forms in logarithms presented in Sections \ref{sec-3} and \ref{sec-4} are similar ones appearing in \cite{Sk} and \cite{BaBen}.
The combination of the consequences of Sections \ref{sec-2}, \ref{sec-3} and \ref{sec-4} enables us to finish the proof of our theorem, except for considering several concrete equations. 
The final section is devoted to handle such equations. 
For this, we adopt using the algorithm of Bert\'ok and Hajdu mentioned before, where we mainly explain how their algorithm works well for a particular equation, $3^x+4^y+5^z=6^w$.

\section{Elementary lemmas}\label{sec-2}

Let $n$ be a positive integer.
Write $N=n+1$ in the equation under consideration.
We have
\begin{equation}\label{eq-N}
(N-1)^x+N^y+(N+1)^z=(N+2)^w
\end{equation}
in positive integers $x,y,z$ and $w$.
Below, we prepare many lemmas.

\begin{lem}\label{lem-mod3}
Equation \eqref{eq-N} has no solution if $N \equiv 2 \pmod{3}.$
\end{lem}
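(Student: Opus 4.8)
The plan is to reduce equation \eqref{eq-N} modulo $3$. Assume $N \equiv 2 \pmod{3}$. Then, reading the four bases modulo $3$, one has $N-1 \equiv 1$, $N \equiv -1$, $N+1 \equiv 0$, and $N+2 \equiv 1 \pmod{3}$. Since $x,y,z,w$ are all positive, raising to these powers gives $(N-1)^x \equiv 1$, $N^y \equiv (-1)^y$, $(N+1)^z \equiv 0$, and $(N+2)^w \equiv 1 \pmod{3}$. Substituting into \eqref{eq-N}, the left-hand side is congruent to $1 + (-1)^y + 0 \pmod{3}$, which equals $0$ when $y$ is odd and $2$ when $y$ is even, whereas the right-hand side is congruent to $1 \pmod{3}$. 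As neither $0$ nor $2$ is congruent to $1$ modulo $3$, no solution can exist; this is the desired contradiction.

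This is precisely the kind of elementary congruence restriction in the spirit of Brenner--Foster mentioned in the introduction, and here a single modulus already does the job. The only point requiring a (trivial) check is that $(N+1)^z$ genuinely vanishes modulo $3$, which is guaranteed by $z \ge 1$; so there is no real obstacle to overcome. I would expect that for later lemmas one needs to iterate this idea with larger or more cleverly chosen moduli (for instance powers of $3$, or primes dividing one of $N\pm1$, $N+2$), but for the statement at hand the reduction modulo $3$ is both necessary and sufficient, and the proof is essentially a one-line verification.
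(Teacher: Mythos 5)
Your proof is correct and is essentially the same as the paper's: both reduce \eqref{eq-N} modulo $3$ using $N\equiv 2\pmod 3$ and note that $1+2^y\equiv 1\pmod 3$ (equivalently, $1+(-1)^y\not\equiv 1\pmod 3$) is impossible. The only cosmetic difference is that you write $N\equiv -1$ and case-split on the parity of $y$, while the paper phrases the same contradiction as $2^y\equiv 0\pmod 3$.
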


\begin{proof}
Suppose that $N \equiv 2 \pmod{3}$.
Reducing equation \eqref{eq-N} modulo $3$ gives that $1+2^y \equiv 4^w \pmod{3}$, leading to $2^y \equiv 0 \pmod{3}$.
This contradiction shows the assertion.
\end{proof}

By Lemma \ref{lem-mod3}, we may assume that 
\begin{equation}\label{cond-N-mod3}
N \not\equiv 2 \mod{3}. 
\end{equation}

In what follows, let $(x,y,z,w)$ be a solution to \eqref{eq-N}.

\begin{lem}\label{lem-modN}
The following hold.
\[
\begin{cases}
\,2^w \equiv 0 \pmod{N} & \text{if $x$ is odd},\\
\,2^w \equiv 2 \pmod{N} & \text{if $x$ is even}.\\
\end{cases}
\]
\end{lem}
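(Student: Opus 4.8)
The plan is to reduce equation \eqref{eq-N} modulo $N$ directly. Since $y$ is a positive integer, $N^y \equiv 0 \pmod{N}$, so the middle term on the left disappears. For the remaining terms I would use $N-1 \equiv -1$, $N+1 \equiv 1$, and $N+2 \equiv 2 \pmod{N}$, which turns \eqref{eq-N} into the congruence $(-1)^x + 1 \equiv 2^w \pmod{N}$.

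From here the two cases are immediate. If $x$ is odd, then $(-1)^x = -1$, so the left-hand side is $0$ and we get $2^w \equiv 0 \pmod{N}$. If $x$ is even, then $(-1)^x = 1$, so the left-hand side is $2$ and we get $2^w \equiv 2 \pmod{N}$. This exhausts both cases in the statement.

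There is essentially no obstacle here: the argument is a single modular reduction, of the same elementary flavour as the proof of Lemma \ref{lem-mod3}. The only point worth checking is that $y \ge 1$ is genuinely used to kill the term $N^y$, which is guaranteed since $(x,y,z,w)$ is a solution in positive integers. I would write the proof in three short sentences mirroring the structure of the preceding lemma.
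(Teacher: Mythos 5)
Your proposal is correct and is exactly the paper's argument: reduce \eqref{eq-N} modulo $N$ to get $(-1)^x+1 \equiv 2^w \pmod{N}$ and split according to the parity of $x$. Nothing further is needed.
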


\begin{proof}
Reducing equation \eqref{eq-N} modulo $N$ gives that
\[
(-1)^x+1 \equiv 2^w \mod{N}.
\]
This implies the assertion.
\end{proof}

From now on, assume that $n \equiv 3 \pmod{4}$, i.e.,
\begin{equation}\label{ass-N}
N \equiv 0 \mod{4}.
\end{equation}

\begin{lem}\label{lem-xodd}
$x$ is odd.
\end{lem}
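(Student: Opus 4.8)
The plan is to proceed by contradiction, assuming $x$ is even and extracting a constraint on $w$ from Lemma~\ref{lem-modN} that is incompatible with the sizes of the terms in \eqref{eq-N}. Concretely, if $x$ is even then Lemma~\ref{lem-modN} gives $2^w \equiv 2 \pmod{N}$. The next step is to feed in the standing hypothesis \eqref{ass-N}, namely $4 \mid N$: reducing the congruence $2^w \equiv 2 \pmod{N}$ modulo $4$ yields $2^w \equiv 2 \pmod{4}$, and since $4 \mid 2^w$ whenever $w \ge 2$, this forces $w = 1$.

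It then remains only to exclude $w = 1$, and here a crude lower bound for the left-hand side of \eqref{eq-N} suffices. With $w = 1$, equation \eqref{eq-N} becomes $(N-1)^x + N^y + (N+1)^z = N+2$. Each summand on the left is a positive integer power, with positive exponent, of a base greater than $1$, so $(N-1)^x \ge N-1$, $N^y \ge N$ and $(N+1)^z \ge N+1$; adding these shows the left-hand side is at least $3N$, which exceeds $N+2$ as soon as $N > 1$, and in particular for the smallest admissible value $N = 4$ coming from $n \equiv 3 \pmod 4$. This contradiction shows that $x$ must be odd.

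I do not anticipate a genuine obstacle here; this is a short deduction rather than a substantive step. The only points needing care are to invoke \eqref{ass-N} at exactly the right place — the passage from a congruence modulo $N$ to one modulo $4$ — and to confirm that the elementary size estimate in the case $w = 1$ already fails for the minimal value of $N$, which it does.
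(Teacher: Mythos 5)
Your argument is correct and is essentially the paper's proof: assume $x$ even, use Lemma \ref{lem-modN} together with $4 \mid N$ to force $2^w \equiv 2 \pmod 4$ and hence $w=1$, then rule out $w=1$ (the paper dismisses this case as clearly impossible, and your size estimate $(N-1)^x+N^y+(N+1)^z \ge 3N > N+2$ is exactly the justification).
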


\begin{proof}
Suppose on the contrary that $x$ is even.
Lemma \ref{lem-modN} says that
\[
2^w \equiv 2 \mod{N}.
\]
Since $4 \mid N$ by \eqref{ass-N}, one has
\[
2^w \equiv 2 \mod{4}.
\]
This implies that $w=1$.
However, equation \eqref{eq-N} for this case clearly does not hold.
\end{proof}

Lemmas \ref{lem-modN} and \ref{lem-xodd} together say that 
\[
2^w \equiv 0 \mod{N}.
\]
This means that $N$ has to be a power of $2$.
Further, this fact together with condition \eqref{cond-N-mod3} tells us that $N \equiv 1 \pmod{3}$. 
To sum up, we can write
\[
N=4^e
\]
for some $e \in \N$. 
Then equation \eqref{eq-N} becomes
\begin{equation}\label{eq-e}
(4^e-1)^x+4^{ey}+(4^e+1)^z=(4^e+2)^w
\end{equation}
with 
\[
N=4^e.
\]

We remark in \eqref{eq-e} that the following inequalities hold:
\[
x<\mu_x w, \ \ y<\mu_y w, \ \ z<\mu_z w, 
\]
where 
\[
\mu_x=\frac{\log (N+2)}{\log(N-1)}, \ \ \mu_y=\frac{\log (N+2)}{\log N}, \ \ \mu_z=\frac{\log (N+2)}{\log(N+1)}.
\]
Further, observe that
\begin{gather}\label{cong-mod2mod3}
N-1 \equiv 0 \pmod{3}, \quad N+2 \equiv 0 \pmod{6}.
\end{gather}

\begin{lem}\label{lem-yodd-x1mod2e}
The following hold.
\begin{itemize}
\item[(i)] $y$ is odd.
\item[(ii)] $x \equiv 1 \pmod{2e}$ with $x>1.$
In particular, $x \ge 2e+1.$
\end{itemize}
\end{lem}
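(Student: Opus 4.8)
The idea is that both statements fall out of reducing equation~\eqref{eq-e} modulo well-chosen divisors of $N+1=4^e+1$, where the oddness of $x$ supplied by Lemma~\ref{lem-xodd} is exactly what makes the four terms combine favourably; note that modulo any divisor of $4^e+1$ one has $4^e\equiv-1$.

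For part~(i), I would reduce \eqref{eq-e} modulo $4^e+1$. Using $4^e\equiv-1$ and that $x$ is odd, the first term becomes $(4^e-1)^x\equiv(-2)^x=-2^x$, the third term $(4^e+1)^z\equiv0$, and the right-hand side $(4^e+2)^w\equiv1$, so that $4^{ey}\equiv(-1)^y\equiv 1+2^x\pmod{4^e+1}$. Were $y$ even this would force $2^x\equiv0\pmod{4^e+1}$, impossible since $4^e+1$ is an odd integer greater than $1$. Hence $y$ is odd, and then $4^{ey}=(4^e)^y\equiv-1\pmod{4^e+1}$, so the last congruence rewrites as $2^{x-1}\equiv-1\pmod{4^e+1}$.

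For part~(ii), I would sharpen this by working modulo a single prime $p\mid 4^e+1$ for which the multiplicative order of $2$ is exactly $4e$. Such a prime exists: by Bang's theorem $2^{4e}-1$ has a primitive prime divisor for every $e\ge1$ (the sole Zsygmondy exception $2^6-1$ does not occur, as $4e\ne6$), and a primitive prime divisor $p$ of $2^{4e}-1$ divides no $2^k-1$ with $k<4e$, hence in particular not $2^{2e}-1$, so it divides $(2^{4e}-1)/(2^{2e}-1)=4^e+1$ and satisfies $\mathrm{ord}_p(2)=4e$. Reducing \eqref{eq-e} modulo such a $p$ exactly as above, now using that both $x$ and $y$ are odd, gives $2^{x-1}\equiv-1\equiv2^{2e}\pmod{p}$, hence $x-1\equiv2e\pmod{4e}$. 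Thus $x-1=2e(2k+1)$ for some integer $k$, and $k\ge0$ because $x\ge1$; this yields $x\equiv1\pmod{2e}$ together with $x\ge2e+1>1$.

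Both reductions are elementary once the modulus is fixed; the only point needing a little care is securing a prime divisor $p$ of $4^e+1$ with $\mathrm{ord}_p(2)=4e$, and this is precisely what a primitive prime divisor of $2^{4e}-1$ provides, the absence of any relevant Zsygmondy exception being immediate.
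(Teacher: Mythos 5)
Your proof is correct. Part (i) is essentially the paper's own argument: reduce \eqref{eq-e} modulo $N+1=4^e+1$, use the oddness of $x$ from Lemma \ref{lem-xodd}, rule out $y$ even, and arrive at $2^{x-1}\equiv-1\pmod{4^e+1}$. For part (ii) you take a genuinely different final step: you invoke Bang/Zsygmondy to get a primitive prime divisor $p$ of $2^{4e}-1$ (correctly checking that $4e\neq 6$, so no exceptional case occurs), note that $p\mid 4^e+1$ and $\mathrm{ord}_p(2)=4e$, and conclude $x-1\equiv 2e\pmod{4e}$. The paper instead finishes directly with the full modulus $2^{2e}+1$: no $t$ with $0<t<2e$ can satisfy $2^t\equiv-1\pmod{2^{2e}+1}$, simply because $0<2^t+1<2^{2e}+1$, so $2e$ is the least such exponent, the order of $2$ is $4e$, and $x-1$ must be a positive odd multiple of $2e$. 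Thus your Zsygmondy detour is sound but heavier than needed --- since the modulus is $2^{2e}+1$ itself, the order of $2$ is transparent from a one-line size argument --- and both routes yield the same conclusion $x\equiv 1\pmod{2e}$ with $x\geq 2e+1$ (indeed both give the slightly sharper $x\equiv 2e+1\pmod{4e}$, which the lemma does not require).
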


\begin{proof}
Reducing equation \eqref{eq-N} modulo $N+1$ gives that
\[
(-2)^x+(-1)^y \equiv 1 \mod{N+1}.
\]
From this it is easy to see that $y$ is odd as $N+1$ is odd.
Further, noting that $x$ is odd by Lemma \ref{lem-xodd}, one finds that  the above displayed congruence becomes $2^x \equiv -2 \pmod{N+1}$ with $N=2^{2e}$, so that 
\[
2^{x-1} \equiv -1 \mod{2^{2e}+1}.
\]
This implies that $x>1$, and also that $x-1 \equiv 0 \pmod{2e}$ as $2e$ is the least one among the positive integers $t$ such that $2^t \equiv -1 \pmod{2^{2e}+1}$. 
\end{proof}

\begin{lem}\label{lem-wge2e1}
$w \ge 2e+1$ if $(e,w) \ne (1,2).$
\end{lem}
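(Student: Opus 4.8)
The plan is to reduce \eqref{eq-e} modulo appropriate powers of $2$, exploiting the parities of $x,y,z$ together with the lower bound $x\ge 2e+1$ from Lemma~\ref{lem-yodd-x1mod2e}(ii), with a crude size estimate taking care of the single degenerate subcase. First I would reduce \eqref{eq-e} modulo $2^{2e}=N$. Since $x$ is odd (Lemma~\ref{lem-xodd}), and $4^e-1\equiv-1$, $4^{ey}\equiv0$, $4^e+1\equiv1\pmod{2^{2e}}$, the left-hand side is $\equiv(-1)^x+1\equiv0\pmod{2^{2e}}$; and since $4^e+2=2(2^{2e-1}+1)$ with $2^{2e-1}+1$ odd, the right-hand side $(4^e+2)^w$ is exactly divisible by $2^w$. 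Hence $w\ge 2e$. If $e=1$ this already gives $w\ge 2$, so either $(e,w)=(1,2)$ or $w\ge 3=2e+1$; thus we may assume $e\ge 2$, and since $w\ge 2e$ it suffices to exclude $w=2e$.

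Suppose then $e\ge 2$ and $w=2e$, and first treat the case $y=1$. By Lemma~\ref{lem-yodd-x1mod2e}(ii) we have $x\ge 2e+1$. An elementary calculation shows $(4^e-1)^{2e+1}>(4^e+2)^{2e}$ for every $e\ge 2$; indeed this is equivalent to $4^e-1>\bigl(1+\tfrac{3}{4^e-1}\bigr)^{2e}$, and the right-hand side is at most $\exp\!\bigl(\tfrac{6e}{4^e-1}\bigr)$, which for $e\ge 2$ lies well below $4^e-1$ (the inequality failing exactly at $e=1$). Consequently the left-hand side of \eqref{eq-e} is at least $(4^e-1)^x\ge(4^e-1)^{2e+1}>(4^e+2)^{2e}=(4^e+2)^w$, contradicting \eqref{eq-e}.

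Next, still with $e\ge 2$ and $w=2e$, treat the case $y\ge 2$; here $y\ge 3$ since $y$ is odd by Lemma~\ref{lem-yodd-x1mod2e}(i). Reducing \eqref{eq-e} modulo $3$ and using \eqref{cong-mod2mod3} (so that $3\mid 4^e-1$ and $3\mid 4^e+2$, while $4^{ey}\equiv 1$ and $(4^e+1)^z\equiv 2^z\pmod 3$) yields $1+2^z\equiv 0\pmod 3$, which forces $z$ to be odd. Now reduce \eqref{eq-e} modulo $2^{2e+1}$. By the binomial theorem, and since $x$ and $z$ are odd, $(4^e-1)^x\equiv-1+x\,2^{2e}\equiv-1+2^{2e}$ and $(4^e+1)^z\equiv1+z\,2^{2e}\equiv1+2^{2e}\pmod{2^{2e+1}}$, while $4^{ey}=2^{2ey}\equiv 0\pmod{2^{2e+1}}$ because $2ey\ge 4e\ge 2e+1$. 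Adding, the left-hand side of \eqref{eq-e} is $\equiv 2^{2e+1}\equiv 0\pmod{2^{2e+1}}$, whereas the right-hand side $(4^e+2)^{2e}$ is exactly divisible by $2^{2e}$ but not by $2^{2e+1}$---a contradiction. This establishes $w\ge 2e+1$ in all cases.

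The step I expect to be the genuine obstacle is the subcase $y=1$: there the reduction modulo $2^{2e+1}$ is inconclusive (the left-hand side becomes $\equiv 2^{2e}\pmod{2^{2e+1}}$, perfectly compatible with $w=2e$), so one cannot avoid combining the arithmetic bound $x\ge 2e+1$ with a true size inequality $(4^e-1)^{2e+1}>(4^e+2)^{2e}$. Pinning down a clean form of that inequality valid for all $e\ge 2$---and recognising that its failure at $e=1$ is exactly what admits the exceptional solution with $(e,w)=(1,2)$---is the delicate point of the argument.
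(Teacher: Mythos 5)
Your argument is correct, but it is more elaborate than it needs to be, and the paper's proof is essentially the piece you relegate to the subcase $y=1$. Since the term $(4^e-1)^x$ alone is smaller than $(4^e+2)^w$, one always has $w>\frac{\log(4^e-1)}{\log(4^e+2)}\,x$, and combining this with $x\ge 2e+1$ from Lemma~\ref{lem-yodd-x1mod2e}\,(ii) already forces $w\ge 2e+1$ for $e\ge 2$ and $w\ge 2$ for $e=1$ --- exactly your inequality $(4^e-1)^{2e+1}>(4^e+2)^{2e}$, which holds for every $e\ge2$ and fails only at $e=1$, accounting for the exception $(e,w)=(1,2)$. This size comparison does not care about $y$ at all (the left-hand side of \eqref{eq-e} is at least $(4^e-1)^x$ in every case), so your preliminary $2$-adic step giving $w\ge 2e$ (which, incidentally, is already implicit in the paper right after Lemma~\ref{lem-xodd}, where $2^w\equiv 0\pmod{N}$ with $N=4^e$) and your mod~$2^{2e+1}$ analysis for $y\ge 2$, including the re-derivation of the parity of $z$, are valid but superfluous. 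Your closing remark that the congruence route is ``inconclusive'' for $y=1$ and that a genuine size inequality is unavoidable is well observed; the only thing you missed is that this size inequality by itself settles all cases at once, which is what makes the paper's proof a two-line argument.
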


\begin{proof}
We know that
\[
w > \frac{\log(4^e-1)}{\log (4^e+2)}\,x.
\]
Since $x \ge 2e+1$ by Lemma \ref{lem-yodd-x1mod2e}\,(ii), the above inequality implies the assertion.
\end{proof}

If $(e,w)=(1,2),$ then $(x,y,z)=(3,1,1)$.
Thus, by Lemma \ref{lem-wge2e1}, we may assume that
\[
w \ge 2e+1.
\]

\begin{lem}\label{lem-zodd}
$z$ is odd.
\end{lem}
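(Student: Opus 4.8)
The plan is to establish this by a single elementary congruence, in the same spirit as Lemma~\ref{lem-mod3} and the parity arguments in Lemmas~\ref{lem-xodd} and~\ref{lem-yodd-x1mod2e}. Since at this point we are in the situation $N=4^e$, we have $N\equiv 1\pmod 3$, and in particular $3$ divides both $N-1$ and $N+2$, as already recorded in \eqref{cong-mod2mod3}. This makes reduction of equation \eqref{eq-N} modulo $3$ the natural tool: it is designed precisely to annihilate the first and fourth terms and thereby isolate the parity of $z$.

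Carrying this out, modulo $3$ one has $(N-1)^x\equiv 0$, $N^y\equiv 1$ (using $N\equiv 1\pmod 3$), $(N+1)^z\equiv 2^z\equiv(-1)^z$, and $(N+2)^w\equiv 0$ (using $w\ge 1$). Hence equation \eqref{eq-N} yields
\[ 1+(-1)^z\equiv 0\pmod 3. \]
If $z$ were even, the left-hand side would be $\equiv 2\pmod 3$, which is impossible; therefore $z$ is odd.

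There is essentially no obstacle here, and this is by far the shortest step so far: the only thing worth noticing is that the modulus $3$ already does the job by itself. The more obvious alternatives—reducing modulo $4$ or modulo $8$—would couple the parity of $z$ to the precise value of $y$ (for instance the case $y=1$ would have to be separated out), forcing unwanted case distinctions. Exploiting instead that $3\mid 4^e-1$ and $3\mid 4^e+2$ makes two of the four terms vanish simultaneously, so the congruence collapses immediately to a statement about $(-1)^z$.
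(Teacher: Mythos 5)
Your proof is correct and is essentially identical to the paper's: the paper also reduces equation \eqref{eq-e} modulo $3$, using $3\mid 4^e-1$ and $3\mid 4^e+2$, to get $1+2^z\equiv 0\pmod 3$ and conclude that $z$ is odd.
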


\begin{proof}
Reducing equation \eqref{eq-e} modulo 3 gives that $1+2^z \equiv 0 \pmod{3}$. 
This implies the assertion.
\end{proof}

\begin{lem}\label{lem-mod4e}
$y>1$ and $x+z \equiv 2^{w-2e} \pmod{4^e}.$
\end{lem}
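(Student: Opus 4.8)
The plan is to establish both parts by reducing equation~\eqref{eq-e} modulo appropriately chosen powers of $2$, exploiting the parities secured in Lemmas~\ref{lem-xodd}, \ref{lem-yodd-x1mod2e}(i) and \ref{lem-zodd}, together with the bound $w\ge 2e+1$.

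First, to prove $y>1$, I would reduce~\eqref{eq-e} modulo $2^{2e+1}$. Since $4^e+2=2(2^{2e-1}+1)$ with $2^{2e-1}+1$ odd, the right-hand side $(4^e+2)^w$ has $2$-adic valuation $w\ge 2e+1$ and hence vanishes modulo $2^{2e+1}$. On the left-hand side, expanding binomially and using that $x$ and $z$ are odd gives $(4^e-1)^x\equiv -1+2^{2e}$ and $(4^e+1)^z\equiv 1+2^{2e}\pmod{2^{2e+1}}$. If $y=1$, the middle term $4^{e}$ is exactly $2^{2e}$, so the left-hand side is $\equiv 3\cdot 2^{2e}\equiv 2^{2e}\not\equiv 0\pmod{2^{2e+1}}$, a contradiction; therefore $y>1$. (Note that for $y\ge 2$ the middle term is $\equiv 0$ and this argument yields no contradiction, exactly as it should.)

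Next, knowing $y\ge 2$, I would reduce~\eqref{eq-e} modulo $N^2=4^{2e}$. Because $4^{ey}=2^{2ey}$ with $2ey\ge 4e$, that term drops out. Using $x,z$ odd, the binomial expansions give $(4^e-1)^x\equiv -1+xN$ and $(4^e+1)^z\equiv 1+zN\pmod{N^2}$. For the right-hand side, only the first two terms of the binomial expansion survive modulo $N^2$, so $(N+2)^w\equiv 2^w+wN\,2^{w-1}\pmod{N^2}$; here $2^w=2^{w-2e}N$ since $w\ge 2e$, while $wN\,2^{w-1}=w\,2^{w-1-2e}N^2\equiv 0\pmod{N^2}$ since $w\ge 2e+1$. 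Substituting everything, \eqref{eq-e} reduces to $(x+z)N\equiv 2^{w-2e}N\pmod{N^2}$, and cancelling $N$ gives $x+z\equiv 2^{w-2e}\pmod{4^e}$.

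All steps are elementary congruence manipulations; the only delicate point is the $2$-adic valuation bookkeeping in the expansion of $(N+2)^w$ modulo $N^2$ — in particular, that the cross term $wN\,2^{w-1}$ is annihilated — which is precisely where the previously secured inequality $w\ge 2e+1$ (Lemma~\ref{lem-wge2e1}) is genuinely used, together with the exact choice of modulus $2^{2e+1}$ in the first part so that $(4^e+2)^w$ vanishes while the residual $2^{2e}$ coming from $y=1$ does not.
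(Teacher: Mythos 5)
Your proof is correct and follows essentially the same route as the paper: reducing the equation modulo $N^2=4^{2e}$ after binomial expansion is exactly the paper's step of dividing equation \eqref{eq-e} by $4^e$ and reducing modulo $4^e$, and the valuation bookkeeping (using $x,z$ odd and $w\ge 2e+1$) matches. The only cosmetic difference is that you obtain $y>1$ by a separate reduction modulo $2^{2e+1}$, whereas the paper reads it off from the same congruence $x+4^{e(y-1)}+z\equiv 2^{w-2e}\pmod{4^e}$ by a parity argument; both are valid.
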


\begin{proof}
First, recall that both $x,z$ are odd, and that $w \ge 2e+1$.
We expand three terms in equation \eqref{eq-e} as follows:
\begin{align*}
&(4^e-1)^x = -1 + 4^e x - 4^{2e} \binom{x}{2} + 4^{3e} \binom{x}{3} - \cdots\,,
\\
&(4^e+1)^z=1 + 4^e z + 4^{2e} \binom{z}{2} + 4^{3e} \binom{z}{3} + \cdots\,,
\\
&(2+4^e)^w=2^w + w\,2^{w-1}4^e  +\binom{w}{2} 2^{w-2}4^{2e}  + \binom{w}{3}2^{w-3}4^{3e}  + \cdots\,.
\end{align*}
Substituting these into equation \eqref{eq-e}, and dividing both sides of the resulting one by $4^e$, one finds that
\begin{equation}\label{eq-e-2}
\begin{split}
& \ \ \ \ x - 4^{e} \binom{x}{2} + 4^{2e} \binom{x}{3} - \cdots
\\
&+4^{e(y-1)}
+z + 4^{e} \binom{z}{2} + 4^{2e} \binom{z}{3} + \cdots
\\
= & \ 2^{w-2e} +  w\,2^{w-1} + \binom{w}{2} 2^{w-2}4^{e} + \binom{w}{3} 2^{w-3}4^{2e} + \cdots\,.
\end{split}
\end{equation}
Reducing this modulo $4^e$ implies that
\[
x+(4^e)^{y-1}+z \equiv 2^{w-2e} \mod{4^e}.
\]
Since the left-hand side has to be even, it follows that $(4^e)^{y-1}$ is even, so that $y>1$, and $x+z \equiv 2^{w-2e} \pmod{4^e}$.
\end{proof}

\begin{lem}\label{lem-mod4e-2}
The following hold.
\[
\begin{cases}
\,x+z \equiv 0 \pmod{4^e} & \text{if $w \ge 4e$},\\
\,x+z=2^{w-2e} & \text{if $w<4e$ and $(e,w) \ne (1,3)$}.\\
\end{cases}
\]
\end{lem}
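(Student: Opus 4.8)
The plan is to read off both cases directly from Lemma~\ref{lem-mod4e}, which already supplies the congruence $x+z\equiv 2^{w-2e}\pmod{4^e}$; the only real task is to control the size of $2^{w-2e}$ relative to $4^e$ in each regime. If $w\ge 4e$, then $w-2e\ge 2e$, so $4^e=2^{2e}$ divides $2^{w-2e}$ and the congruence collapses to $x+z\equiv 0\pmod{4^e}$; this case is immediate.

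So assume $w<4e$ and $(e,w)\ne(1,3)$. Here I want to promote the congruence to the equality $x+z=2^{w-2e}$, and for that it suffices to prove $0\le x+z<4^e$: since $w\ge 2e+1$ by our standing assumption we have $1\le w-2e\le 2e-1$, hence $2\le 2^{w-2e}\le 2^{2e-1}<4^e$, so $2^{w-2e}$ is the unique representative of the residue class of $x+z$ modulo $4^e$ lying in $[0,4^e)$. First observe that for $e=1$ the constraints $w\ge 2e+1=3$ and $w<4e=4$ force $w=3$, which is excluded; thus I may take $e\ge 2$. To bound $x+z$ I use the inequalities $x<\mu_x w$ and $z<\mu_z w$ recorded after \eqref{eq-e}. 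For $e\ge 2$ both $\mu_x=\log(4^e+2)/\log(4^e-1)$ and $\mu_z=\log(4^e+2)/\log(4^e+1)$ are decreasing in $e$, so $\mu_x+\mu_z\le \log 18/\log 15+\log 18/\log 17<2.1$, giving $x+z<2.1\,w\le 2.1(4e-1)$. A short induction on $e\ge 2$ then shows $2.1(4e-1)<4^e$: the base case $e=2$ is $14.7<16$, and passing from $e$ to $e+1$ increases the left side by $8.4$ while the right side grows by $3\cdot 4^e\ge 48$. Hence $x+z<4^e$, which yields $x+z=2^{w-2e}$ and finishes the proof.

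I do not expect any genuine obstacle here: the entire argument rests on Lemma~\ref{lem-mod4e} together with the trivial upper bounds on $x$ and $z$, and the only computations are the elementary estimate $\mu_x+\mu_z<2.1$ for $e\ge 2$ and the linear-versus-exponential comparison $2.1(4e-1)<4^e$. It is worth stressing that the excluded pair $(e,w)=(1,3)$ is exactly what renders the case $e=1$ vacuous in the second alternative, so no separate treatment of small parameters is needed.
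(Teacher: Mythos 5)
Your proof is correct and follows essentially the same route as the paper: both cases are read off from the congruence of Lemma~\ref{lem-mod4e}, and the equality in the regime $w<4e$ is forced by comparing $x+z<(\mu_x+\mu_z)w$ with $4^e$, the excluded pair $(e,w)=(1,3)$ being precisely where this size argument fails. The only cosmetic difference is that you isolate $e=1$ up front and make the numerical comparison explicit (the bound $\mu_x+\mu_z<2.1$ and the induction $2.1(4e-1)<4^e$), whereas the paper phrases it as ruling out $x+z\ge 2^{w-2e}+4^e$ via a short computation over $2e+1\le w<4e$.
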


\begin{proof}
By Lemma \ref{lem-mod4e}, it suffices to consider when $w<4e$, and we have
\[
x+z \equiv 2^{w-2e} \mod{4^e}.
\]
This implies that either $x+z=2^{w-2e}$ or $x+z \ge 2^{w-2e}+4^e$. 
Suppose that the latter case holds. 
Since $x<\mu_x w$ and $z<\mu_z w$, one has
\[
2^{w-2e}+4^e < (\mu_x+\mu_z)\, w.
\]
This together with the inequality $2e+1 \le w<4e$ implies that $e=1$ and $w=3$.
\if0
for e in [1..10] do 
 N:=4^e;
 muz:=Log(N+2)/Log(N+1);
 mux:=Log(N+2)/Log(N-1);
 e;
 for w in [2*e+1..4*e-1] do
  if 2^(w-2*e)+4^e lt (mux+muz)*w then ''w='',w; end if;
 end for;
end for;
\fi
This completes the proof.
\end{proof}

If $(e,w)=(1,3)$, then $(x,y,z)=(3,3,3)$.
Thus, by Lemma \ref{lem-mod4e-2}, we may assume that 
\[
x+z=2^{w-2e},
\]
whenever $w<4e$.

\begin{lem}\label{lem-wge4e}
$w \ge 4e$ if $(e,x,y,z,w) \ne (1,3,3,3,3).$
\end{lem}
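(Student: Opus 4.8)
The plan is to argue by contradiction: suppose $w<4e$, while $(e,x,y,z,w)\ne(1,3,3,3,3)$. Then by Lemma~\ref{lem-mod4e-2} and the remark following it we have $x+z=2^{w-2e}$, and since $x\ge 2e+1$ by Lemma~\ref{lem-yodd-x1mod2e}(ii) it follows that $2^{w-2e}=x+z\ge 2e+2$. If $e=1$ this already gives $w-2e\ge 2$, i.e. $w\ge 4=4e$, a contradiction; so I may assume $e\ge 2$, and then $x+z\ge 2e+2\ge 6$ forces $w-2e\ge 3$. The key preliminary remark is that $v_2(z-x)=1$: indeed $x$ and $z$ are odd (Lemmas~\ref{lem-xodd} and~\ref{lem-zodd}), so $v_2(2x)=1$, while $v_2(2^{w-2e})=w-2e\ge 3$, and $z-x=2^{w-2e}-2x$. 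In particular $z\ne x$, so $\binom{z}{2}-\binom{x}{2}\ne 0$, and since $z+x-1$ is odd,
\[ v_2\!\left(\binom{z}{2}-\binom{x}{2}\right)=v_2\!\left(\frac{(z-x)(z+x-1)}{2}\right)=v_2(z-x)-1=0. \]

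Next I would run a $2$-adic valuation argument on \eqref{eq-e}, rewritten as $4^{ey}=(4^e+2)^w-(4^e-1)^x-(4^e+1)^z$. Expanding all three powers binomially, using that $x,z$ are odd and that $(x+z)4^e=2^w$, this becomes
\[ 4^{ey}=w\,2^{w+2e-1}+\binom{w}{2}2^{w+4e-2}+\cdots-\left(\binom{z}{2}-\binom{x}{2}\right)4^{2e}-\left(\binom{x}{3}+\binom{z}{3}\right)4^{3e}-\cdots, \]
where each suppressed term has $2$-adic valuation at least $6e$. By the previous step the term $\bigl(\binom{z}{2}-\binom{x}{2}\bigr)4^{2e}$ has $2$-adic valuation exactly $4e$, whereas every other term has valuation strictly greater than $4e$: the terms $w\,2^{w+2e-1}$ and $\binom{w}{2}2^{w+4e-2}$ have valuation at least $w+2e-1\ge 4e+2$ (using $w\ge 2e+3$), and all the rest have valuation at least $6e$. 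Hence the right-hand side has $2$-adic valuation exactly $4e$, while the left-hand side $4^{ey}$ has valuation $2ey$; therefore $2ey=4e$, i.e. $y=2$, contradicting that $y$ is odd (Lemma~\ref{lem-yodd-x1mod2e}(i)). This contradiction proves $w\ge 4e$.

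The step I expect to be the most delicate is the valuation bookkeeping of the second paragraph. One must be certain that $\binom{z}{2}-\binom{x}{2}$ is nonzero with $2$-adic valuation exactly $0$ — which is precisely where the identity $z-x=2^{w-2e}-2x$ together with the gap between $v_2(2x)=1$ and $v_2(2^{w-2e})\ge 3$ is used — and that every remaining term on the right genuinely has valuation exceeding $4e$, for which the bound $w-2e\ge 3$ (coming from $x\ge 2e+1$) is exactly what is needed. Everything else reduces to a routine binomial expansion.
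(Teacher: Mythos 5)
Your proof is correct, but it follows a genuinely different route from the paper's. The paper splits the range $w<4e$ into two cases: for $3e\le w<4e$ it plays the exponential growth of $2^{w-2e}$ against the linear bound $x+z<(\mu_x+\mu_z)w$ to force $e\le 4$, $w\le 12$, and then finds $(1,3,3,3,3)$ by a brute-force search; for $w<3e$ (so $e\ge 5$) it pushes the binomial expansion to the next level modulo $4^e$, obtaining $-x(x-1)+z(z-1)\equiv w\,2^{w-2e}\pmod{4^e}$, upgrades this congruence to an equality by size estimates, and derives a contradiction from the divisibility $(z+x)\mid(z-x)$ in $(z-x)(z+x-1)=(z+x)w$. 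You instead exploit the same cancellation $2^w=(x+z)4^e$ purely $2$-adically: the identity $z-x=2^{w-2e}-2x$ with $w-2e\ge 3$ pins down $v_2\bigl(\binom{z}{2}-\binom{x}{2}\bigr)=0$, so $\bigl(\binom{z}{2}-\binom{x}{2}\bigr)4^{2e}$ is the unique term of minimal valuation $4e$ on the right of $4^{ey}=(4^e+2)^w-(4^e-1)^x-(4^e+1)^z$, forcing $2ey=4e$, i.e. $y=2$, against the parity of $y$; the case $e=1$ is dispatched directly from $x\ge 2e+1$. Your valuation bookkeeping checks out (all competing terms have valuation at least $\min\{w+2e-1,\,6e\}>4e$ once $e\ge2$ and $w\ge 2e+3$), and like the paper you correctly route the exceptional tuple through the remark after Lemma \ref{lem-mod4e-2} (and, implicitly, the standing assumption $w\ge 2e+1$ from the remark after Lemma \ref{lem-wge2e1}). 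What your argument buys is uniformity and effectivity by hand: no case split at $w=3e$, no analytic estimates, and no computer search over $e\le 4$, $w\le 12$; the paper's version, on the other hand, needs no information about $v_2(z-x)$ and stays entirely at the level of congruences and size comparisons already set up in Lemma \ref{lem-mod4e}.
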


\begin{proof}
Suppose that $w<4e$.
Then
\begin{equation}\label{eq-nontriv-1}
x+z=2^{w-2e}.
\end{equation}
We will observe that this leads to $(e,x,y,z,w)=(1,3,3,3,3)$.

First, we consider the case where $w \ge 3e$.
Since $x+z<(\mu_x+\mu_z)\,w$, it follows from \eqref{eq-nontriv-1} that
\[
2^{w-2e}<(\mu_x+\mu_z)\,w.
\]
This together with the inequality $3e \le w<4e$ implies that $e \le 4$ and $w \le 12$.
\if0
for e in [1..100] do 
 N:=4^e;
 mux:=Log(N+2)/Log(N-1);
 muz:=Log(N+2)/Log(N+1);
 for w in [3*e..4*e-1] do
  if 2^(w-2*e) lt (mux+muz)*w then "e=",e, "w=",w; end if;
 end for;
end for;
\fi
A brute force method on equation \eqref{eq-e} with \eqref{eq-nontriv-1} shows that $(e,x,y,z,w)=(1,3,3,3,3)$.
Thus, in what follows, we may assume that
\begin{equation}\label{ineq-w-small}
w<3e
\end{equation}
with $e \ge 5$.
We will observe that this leads to a contradiction.

As seen in the proof of Lemma \ref{lem-mod4e}, we can rewrite equation \eqref{eq-e} as \eqref{eq-e-2}.
Inserting relation \eqref{eq-nontriv-1} into \eqref{eq-e-2}, and dividing both sides of the resulting equation by $2^{2e-1}$, one finds that
\begin{align*}
& \ \ - 2 \binom{x}{2} +2\cdot 4^{e} \binom{x}{3} - \cdots
\\
&+2  \cdot 4^{e(y-2)}
+ 2  \cdot \binom{z}{2} + 2 \cdot 4^{e}\binom{z}{3} + \cdots
\\
=\,&\, w\,2^{w-2e} + \binom{w}{2} 2^{w-1} + \binom{w}{3} 2^{w-2}4^{e} + \cdots\,.
\end{align*}
Further, since $y \ge 3$ by Lemmas \ref{lem-yodd-x1mod2e}\,(i) and \ref{lem-mod4e}, and $w \ge 2e+1$, reducing the above equation modulo $4^e$ leads to
\begin{equation}\label{cong-xzw}
-x(x-1)+z(z-1) \equiv w\,2^{w-2e} \mod{4^e}.
\end{equation}
We shall estimate the sizes of the terms appearing in this congruence.
Observe from \eqref{ineq-w-small} that
\begin{align*}
\bigr|-x(x-1)+z(z-1)\bigr|
&=|z-x|\,(z+x-1)\\
&<(\mu_x w-1)\,\big(\,(\mu_x+\mu_z)\,w-1\,\big)\\ 
&\le \big(\,\mu_x (3e-1)-1\,\big)\,\big(\,(\mu_x+\mu_z)(3e-1)-1\,\big)
<4^e/2,
\end{align*}
\[
w\,2^{w-2e} \le (3e-1)\,2^{e-1}<4^e/2.
\]
\if0
t:=3;
for e in [5..10] do 
 N:=4^e;
 mux:=Log(N+2)/Log(N-1);
 muz:=Log(N+2)/Log(N+1);
 if mux*(t*e-1) * (mux+muz)*(t*e-1) lt N/2
    and
   (t*e-1)*2^( (t-2)*e-1 ) lt N/2 
   then "e=",e; 
 end if;
end for;
\fi
To sum up, congruence \eqref{cong-xzw} is an equality, namely,
\[
-x(x-1)+z(z-1)=2^{w-2e}\,w.
\]
Since $2^{w-2e}=z+x$ by \eqref{eq-nontriv-1}, one has
\[
(z-x)(z+x-1)=(z+x)\,w.
\]
This implies that $z+x$ divides $z-x$, which holds only if $z-x=0$, so that $w=0$, a contradiction.
\end{proof}

By Lemmas \ref{lem-mod4e-2} and \ref{lem-wge4e}, we may assume that
\[
x+z \equiv 0 \mod{N}
\]
with $N=4^e$.
In particular, this implies that $x+z \ge N$, so that
\begin{equation}\label{ineq-w-low}
w>\frac{1}{\mu_x+\mu_z}\,N.
\end{equation}

\section{Applications of Baker's method in $p$-adic case}\label{sec-3}

Recall from congruences \eqref{cong-mod2mod3} that in equation \eqref{eq-e} both terms $4^{ey}$ and $(4^e+2)^w$ are even, and both terms $(4^e-1)^x$ and $(4^e+2)^w$ are divisible by 3.
Based on these conditions, here we prove the following:

\begin{lem}\label{lem-padic}
If $e \ge 9,$ then the following hold.
\begin{itemize}
\item[\rm (i)] $y<w/2.$
\item[\rm (ii)] $x<w/2.$
\end{itemize}
\end{lem}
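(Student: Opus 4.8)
The plan is to prove (i) and (ii) by the same mechanism, applying Baker's method in the $p$-adic case with $p=2$ for (i) and $p=3$ for (ii). In each case I would isolate a two-term grouping of \eqref{eq-e} that is visibly highly divisible by $p$, compute its $p$-adic valuation, compare this with an upper bound coming from a lower estimate for linear forms in two $p$-adic logarithms, and finally contradict the lower bound $w>4^e/(\mu_x+\mu_z)$ supplied by \eqref{ineq-w-low}.

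For (i), argue by contradiction: suppose $y\ge w/2$. Rewrite \eqref{eq-e} as
\[
(4^e+2)^w-4^{ey}=(4^e-1)^x+(4^e+1)^z .
\]
On the left, $v_2((4^e+2)^w)=w$ while $v_2(4^{ey})=2ey\ge ew>w$ (using $e\ge 9$), so the left-hand side has $2$-adic valuation exactly $w$. On the right, since $x$ is odd and $4^e+1$ is odd,
\[
(4^e-1)^x+(4^e+1)^z=(4^e+1)^z\bigl(1-\Lambda\bigr),\qquad \Lambda=(1-4^e)^x(1+4^e)^{-z},
\]
and $\Lambda<0$, so $\Lambda\neq 1$ and the right-hand side has $2$-adic valuation $v_2(\Lambda-1)$. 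Hence $v_2(\Lambda-1)=w$. Now I would invoke an explicit lower bound for linear forms in two $2$-adic logarithms (of Bugeaud--Laurent type): it yields $v_2(\Lambda-1)\le C_2(e)\log\max(x,z)$ with $C_2(e)$ effectively computable and of polynomial growth in $e$ (it is built from the heights of $1\mp 4^e$, which are $\asymp e$). Combined with $x<\mu_x w$ and $z<\mu_z w$ this reads $w\le C_2(e)\log(2w)$, an inequality that by itself forces $w$ below an explicit bound $W_2(e)$ of polynomial growth in $e$. Since \eqref{ineq-w-low} gives $w>4^e/(\mu_x+\mu_z)>4^e/3$, the comparison $W_2(e)<4^e/3$ holds from $e=9$ on, which is the desired contradiction.

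For (ii), suppose $x\ge w/2$ and rewrite \eqref{eq-e} as $(4^e+2)^w-(4^e-1)^x=4^{ey}+(4^e+1)^z$. By \eqref{cong-mod2mod3} both terms on the left are divisible by $3$, so $v_3((4^e+2)^w)\ge w$ and $v_3((4^e-1)^x)\ge x\ge w/2$; hence the left-hand side has $3$-adic valuation at least $w/2$. On the right, since $z$ is odd and $\gcd(4^e+1,3)=1$,
\[
4^{ey}+(4^e+1)^z=(4^e+1)^z\bigl(1-\Lambda'\bigr),\qquad \Lambda'=-4^{ey}(4^e+1)^{-z},
\]
with $\Lambda'\equiv 1\pmod 3$ and $\Lambda'<0$, so $\Lambda'\neq 1$ and its $3$-adic valuation equals $v_3(\Lambda'-1)$, which an explicit lower estimate for linear forms in two $3$-adic logarithms bounds above by $C_3(e)\log\max(y,z)\le C_3(e)\log(2w)$. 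Thus $w/2\le C_3(e)\log(2w)$, and, as in (i), this contradicts \eqref{ineq-w-low} once $e\ge 9$.

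The routine portion is the bookkeeping of the $p$-adic valuations and the choice of signs making $\Lambda$ and $\Lambda'$ genuine two-term linear forms. The main obstacle is quantitative: one must select a sufficiently sharp explicit form of the $p$-adic linear forms estimate — the case $p=2$ being the tighter one, since there both bases $1\mp 4^e$ have height $\asymp e$ — so that the comparison with the lower bound \eqref{ineq-w-low} still goes through at the claimed threshold $e=9$.
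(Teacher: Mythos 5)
Your strategy is exactly the paper's: for (i) you group the two even terms and bound $\nu_2\bigl((4^e+1)^z-(1-4^e)^x\bigr)$ by a two-logarithm $2$-adic estimate, for (ii) you group the two multiples of $3$ and bound $\nu_3\bigl((-4^e-1)^z-(-2)^{2ey}\bigr)$, and in both cases you play the resulting polynomial-in-$e$ upper bound for $w$ against the exponential lower bound \eqref{ineq-w-low}. The valuation bookkeeping and the nonvanishing of the linear forms are fine. The gap is in the one step you explicitly defer: the claim that the numbers close at the threshold $e=9$. This is not a routine verification that any ``sufficiently sharp'' estimate will deliver. First, the shape you quote, $\nu_p(\Lambda-1)\le C_p(e)\log\max(b_1,b_2)$, is not what Bugeaud--Laurent type bounds give; they give $C_p(e)\bigl(\max\{\log b'+\mathrm{const},\,c\,E\log p\}\bigr)^2$, i.e.\ a squared logarithm (or a constant branch), and the precise constant $C_p(e)$ depends heavily on the admissible choices of the parameters $E$ and ${\rm g}$. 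Second, and decisively, the lemma is tight at $e=9$: the paper's argument fails at $e=8$, and at $e=9$ the margin (especially in case (ii)) is well under a factor of $2$. To get $C_2(e)$ small enough one must exploit the strong congruence $\alpha_1\equiv\alpha_2\equiv 1\pmod{4^e}$ by taking $E=2e$ in the $2$-adic estimate (the $E^3$ in the denominator is what kills an extra factor of $e$), and in the $3$-adic case one takes ${\rm g}=3$, $E=2$; a generic application with $E$ bounded gives a bound for $w$ worse by a factor on the order of $e$ (and with your single-$\log$ shape the comparison is further distorted), so the contradiction with $w>4^e/(\mu_x+\mu_z)$ would only begin at some larger $e$, not at $e=9$ as the statement requires.

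So: same route as the paper, but the quantitative heart of the proof --- the explicit parameter choices in the $p$-adic linear-forms theorem and the numerical verification that $e\ge 9$ suffices (while $e=8$ does not) --- is asserted rather than carried out, and as stated (``polynomial growth in $e$'', ``holds from $e=9$ on'') it cannot be certified without those choices.
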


\begin{defi}\rm
For any algebraic number $\alpha$ of degree $d$ over $\mathbb Q$, we define the absolute logarithmic height ${\rm h}(\alpha)$ of $\alpha$ by the formula
\[
{\rm h}(\alpha)=\frac{1}{d}\,
\Big(\log |c_0| + \sum \log\,\max\{\,1,|\alpha'|\,\}\,\Big),
\]
where $c_0$ is the leading coefficient of the minimal polynomial of $\alpha$ over $\mathbb Z$, and the sum extends over all conjugates $\alpha'$ of $\alpha$ in the field of complex numbers.
\end{defi}

To use Baker's method in $p$-adic form, we rely on the following well-known result of Bugeaud which is a particular case of \cite[Theorem 2.13]{Bu-book}.

\begin{prop}\label{prop-padic}
Let $p$ be a prime.
Let $\alpha_1$ and $\alpha_2$ be nonzero rational numbers such that $\nu_{p}(\alpha_1)=0$ and $\nu_{p}(\alpha_2)=0.$
Assume that $\alpha_1$ and $\alpha_2$ are multiplicatively independent.
Let ${\rm g}$ be a positive integer such that
\begin{gather*} \label{padic-ass1}
\nu_{p}( {\alpha_1}^{{\rm g}}-1 ) \ge E, \ \
\nu_{p}( {\alpha_2}^{{\rm g}}-1 ) \ge E
\end{gather*}
for some positive number $E$ with $E>1+\frac{1}{p-1}.$
If $p=2,$ then further assume that
\[
\nu_{2}( \alpha_2-1 ) \ge 2.
\]
Let $H_1$ and $H_2$ be positive numbers such that
\[
H_j \ge \max \{ \,{\rm h}(\alpha_j),\,E\log p\, \}, \quad j=1,2.
\]
Then, for any positive integers $b_1$ and $b_2,$
\[
\nu_{p}( {\alpha_1}^{b_1} - {\alpha_2}^{b_2}) \le \frac{36.1 \, {\rm g}\, H_1 H_2}{E^3\log^4 p}\,\Bigr(\max \{\, \log b'+\log (E\log p)+0.4,\,6E\log p \,\}\Bigr)^2
\]
with $b'=b_1/H_2+b_2/H_1.$
\end{prop}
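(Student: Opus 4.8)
The plan is to obtain Proposition~\ref{prop-padic} as the specialization to the rational ground field of Bugeaud's general $p$-adic estimate for linear forms in two logarithms, namely \cite[Theorem~2.13]{Bu-book}. That general theorem is stated for a pair of multiplicatively independent algebraic numbers $\alpha_1,\alpha_2$ lying in a number field $K$, together with a prime ideal $\mathfrak{p}$ of $K$ above the rational prime $p$; its conclusion bounds $\nu_{p}(\alpha_1^{b_1}-\alpha_2^{b_2})$ by an expression whose precise shape depends on the degree $D=[K:\mathbb{Q}]$, the ramification index $e_{\mathfrak{p}}$ and the residue degree $f_{\mathfrak{p}}$, in addition to the parameters ${\rm g}$, $E$ and the height bounds. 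First I would transcribe that statement verbatim, keeping every occurrence of $D$, $e_{\mathfrak{p}}$ and $f_{\mathfrak{p}}$ visible, so that the effect of specializing can be tracked term by term.

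Next I would set $K=\mathbb{Q}$, which is legitimate because $\alpha_1,\alpha_2$ are assumed rational. Then $D=1$, the unique prime above $p$ is $p$ itself with $e_{\mathfrak{p}}=f_{\mathfrak{p}}=1$, and the normalized valuation coincides with the standard $p$-adic valuation $\nu_{p}$, so that $\nu_{p}(p)=1$. With these values in hand I would check, clause by clause, that each hypothesis of the general theorem collapses to the corresponding hypothesis of the proposition: the conditions $\nu_{p}(\alpha_i)=0$ and the multiplicative independence carry over unchanged; the lower bound required of $E$, which in the general theorem involves $e_{\mathfrak{p}}$ and $p$, reduces to exactly $E>1+\tfrac{1}{p-1}$; the congruence conditions $\nu_{p}(\alpha_i^{\rm g}-1)\ge E$ are literally the same; the supplementary requirement at $p=2$ becomes $\nu_{2}(\alpha_2-1)\ge 2$; and the admissibility requirement on the height parameters reduces to $H_j\ge\max\{{\rm h}(\alpha_j),\,E\log p\}$.

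With the hypotheses matched, the final step is to substitute $D=e_{\mathfrak{p}}=f_{\mathfrak{p}}=1$ into the conclusion and confirm that the resulting inequality is precisely the displayed bound. Under this substitution the field- and prime-dependent powers decorating Bugeaud's constant collapse, leaving the numerical value $36.1$ together with the factor $\tfrac{{\rm g}\,H_1H_2}{E^3\log^4 p}$; the logarithmic argument of the general bound specializes to $\log b'$ with $b'=b_1/H_2+b_2/H_1$, and the bracketed maximum reduces to $\max\{\log b'+\log(E\log p)+0.4,\,6E\log p\}$, squared.

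The only genuine work here is bookkeeping, and the step I expect to demand the most care is the faithful transcription of the conclusion: one must confirm that the constant $36.1$ and the precise numerology of the maximum---the additive $0.4$, the factor $6$ multiplying $E\log p$, and the appearance of $E\log p$ inside the logarithm---are exactly what \cite[Theorem~2.13]{Bu-book} produces once $D=e_{\mathfrak{p}}=f_{\mathfrak{p}}=1$. The most error-prone points are the normalization convention for $\nu_{p}$, so that the powers of $\log p$ and the placement of $f_{\mathfrak{p}}$ land correctly, and the precise definition of the auxiliary quantity $b'$, since different sources absorb factors of $D$ and $f_{\mathfrak{p}}$ into it differently; once these conventions are pinned down, the specialization is immediate.
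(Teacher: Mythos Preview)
Your approach is exactly what the paper does: it states the proposition as ``a particular case of \cite[Theorem 2.13]{Bu-book}'' and offers no further proof, so the specialization $K=\mathbb{Q}$, $D=e_{\mathfrak{p}}=f_{\mathfrak{p}}=1$ that you outline is precisely the intended justification. There is nothing to add.
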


\begin{proof}[Proof of Lemma $\ref{lem-padic}$]
Assume that $e>1$. \par
(i) Since $x$ is odd by Lemma \ref{lem-xodd}, reducing equation \eqref{eq-e} modulo $2^{\min\{y,w\}}$ shows
\begin{equation}\label{2adic-ineq1}
\min\{y,w\} \le \nu_{2}\bigr(\, (4^e+1)^z - (1-4^e)^x \,\bigr).
\end{equation}
To find an upper bound for the right-had side above, we use Proposition \ref{prop-padic} with the following parameters:
\begin{align*}
&p=2, \ \alpha_1=4^e+1, \ \alpha_2=1-4^e, \ b_1=z, \ b_2=x, \\ 
&{\rm g}=1, \ E=2e, \ H_1=\log (4^e+1), \ H_2=2e\log 2.
\end{align*}
It follows that
\begin{equation}\label{2adic-ineq2}
\nu_{2}\bigr(\, (4^e+1)^z - (1-4^e)^x \,\bigr) \le \mathcal C\,\Bigr(\max \bigr\{\log b'+\log (2e\log 2)+0.4,\,12e\log 2\,\bigr\}\Bigr)^2,
\end{equation}
where
\[
\mathcal C=\mathcal C(e)=\frac{36.1 \, \log (4^e+1)}{(2e)^2\log^3 2},\ \ \ b'=\frac{z}{2e\log 2}+\frac{x}{\log (4^e+1)}.
\]
Observe that
\[
b' \cdot 2e\log 2 \cdot \exp(0.4)<(z+x) \cdot \exp(0.4)<(\mu_z+\mu_x)\,w \cdot  \exp(0.4)<3w.
\]
\if0
for e in [9..20] do
N:=4^e;
muz:=Log(N+1)/Log(N+2);
mux:=Log(N-1)/Log(N+2);
(1/muz+1/mux)*Exp(0.4);
end for;
\fi

Now, suppose that $y \ge w/2$.
We will observe that this leads to $e \le 8$.
One uses inequalities \eqref{2adic-ineq1} and \eqref{2adic-ineq2} together to find that
\[
w<2\,\mathcal C\,\bigr(\max \{\log (3w),\,12e\log 2\}\,\bigr)^2.
\]
If $\log (3w)>12e\log 2$, then $w>2^{12e}/3$, and
\[
\frac{w}{\log^2 (3w)} < 2\,\mathcal C=\frac{2 \cdot 36.1 \, \log (4^e+1)}{(2e)^2\log^3 2}.
\]
It is easy to see that these inequalities are compatible only if $e \le 8$.
\if0
for e in [9..20] do
 N:=4^e;
 wl:=2^(12*e)/3;
 e, 
 wl / ( Log(3*wl) )^2,
 2*36.1*Log(N+1) / ( (2*e)^2*Log(2)^3 );
end for;
\fi
While if $\log (3w) \le 12e\log 2$, then  
\[
w<2\,\mathcal C\,(12e\log 2)^2.
\]
This together with inequality \eqref{ineq-w-low} gives
\[
\frac{4^e}{\mu_x+\mu_z} <\frac{72 \cdot 36.1 \, \log (4^e+1)}{\log 2}.
\]
This implies that $e \le 8$.
\if0
for e in [8..20] do
 N:=4^e;
 muz:=Log(N+2)/Log(N+1);
 mux:=Log(N+2)/Log(N-1);
 e, N/(mux+muz)-72*36.1*Log(N+1)/Log(2);
end for;
\fi
%
\par
(ii) The proof proceeds along similar lines to that of (i). 
Since $z$ is odd by Lemma \ref{lem-zodd}, reducing equation \eqref{eq-e} modulo $2^{\min\{x,w\}}$ shows
\begin{equation}\label{3adic-ineq1}
\min\{x,w\} \le \nu_{3}\bigr(\, (-4^e-1)^z-(-2)^{2ey} \,\bigr).
\end{equation}
To find an upper bound for the right-hand side above, we use Proposition \ref{prop-padic} with the following parameters:
\begin{align*}
&p=3, \ \alpha_1=-4^e-1, \ \alpha_2=-2, \ b_1=z, \ b_2=2ey, \\ 
&{\rm g}=3, \ E=2, \ H_1=\log (4^e+1), \ H_2=2\log 3.
\end{align*}
It follows that
\begin{equation}\label{3adic-ineq2}
\nu_{3}\bigr(\, (-4^e-1)^z-(-2)^{2ey} \,\bigr) \le \mathcal C\, \Bigr(\max \bigr\{\log b'+\log (2\log 3)+0.4,\,12\log 3\,\bigr\}\,\Bigr)^2,
\end{equation}
where
\[
\mathcal C=\mathcal C(e)=\frac{36.1\cdot{\rm 3}\, \log (4^e+1)}{2^2\log^3 3},\ \ \ b'=\frac{z}{2\log 3}+\frac{2ey}{\log (4^e+1)}.
\]
Observe that
\[
b' \cdot 2\log 3 \cdot \exp(0.4)<(\,z+(2\log 3)y\,) \cdot \exp(0.4)< \exp(0.4)\cdot  (\mu_z+2\log 3\,\mu_y)\,w< 5w.
\]
\if0
for e in [9..20] do
N:=4^e;
muz:=Log(N+1)/Log(N+2);
muy:=Log(N)/Log(N+2);
(1/muz+2*Log(3) / muy)*Exp(0.4);
end for;
\fi
Now, suppose that $x \ge w/2$.
Then inequalities \eqref{3adic-ineq1} and \eqref{3adic-ineq2} together shows that
\[
w<2\,\mathcal C\,\bigr(\max \{\log (5w),\,12\log 3\}\,\bigr)^2.
\]
Similarly to the proof of (i), the above inequality together with inequality \eqref{ineq-w-low} implies that $e \le 8$.
\if0
If $\log (5w)>12\log 3$, then $w>3^{12}/5$, and
\[
\frac{w}{\log^2 (5w)} \le 2\,\mathcal C(e)= \frac{36.1\cdot{\rm 3}\, \log (4^e+1)}{2\log^3 3}.
\]
It is not hard to see that these inequalities are not compatible.
\if0
for e in [8..20] do
 N:=4^e;
 muz:=Log(N+2)/Log(N+1);
 mux:=Log(N+2)/Log(N-1);
 logwl:=Max( N/(muz+mux), 3^12/5);
 e, logwl-2*Log( Log(5)+logwl ), Log( 36.1*3*Log(N+1) / ( 2*Log(3)^3 ) );
end for;
\fi
Thus $\log (5w) \le 12\log 3$, so that 
\[
w<2\,\mathcal C(e)\,(12\log 3)^2
=\frac{72 \cdot 36.1\cdot{\rm 3}\, \log (4^e+1)}{\log 3}.
\]
This implies that $e \le 8$.
\if0
for e in [8..30] do 
 N:=4^e;
 muz:=Log(N+2)/Log(N+1);
 mux:=Log(N+2)/Log(N-1);
 e, N/(muz+mux)- 72*36.1*3*Log(N+1) / Log(3);
end for;
\fi
\fi
\end{proof}

\section{Application of Baker's method in rational case}\label{sec-4}

In what follows, we also write
\[
a=N-1, \ b=N, \ c=N+1, \ d=N+2.
\]
In this section, we assume the conclusion of Lemma \ref{lem-padic}, namely,
\begin{equation}\label{ineq-xy-upp}
\max\{x,y\}<\frac{w}{2}.
\end{equation}
Then, it is not hard to see that $c^z>\max\{a^x,b^y\}$ in equation \eqref{eq-e}, so that
\begin{equation}\label{ineq-czapproxdw}
c^z>\frac{d^w}{3}.
\end{equation}

Dividing both sides of equation \eqref{eq-e} by $c^z$ gives
\[
\frac{a^x+b^y}{c^z}+1=\frac{d^w}{c^z}.
\]
Since $\max\{a,b\}<d$, it follows from inequalities \eqref{ineq-xy-upp} and \eqref{ineq-czapproxdw} that
\[
\frac{a^x+b^y}{c^z}<\frac{d^x+d^y}{c^z}<\frac{\,2d^{w/2}\,}{d^w/3}=\frac{6}{d^{w/2}}.
\]
Then
\[
\log \frac{\,d^w\,}{c^z}=\log\biggr(\frac{a^x+b^y}{c^z}+1\biggr)<\frac{a^x+b^y}{c^z}<\frac{6}{d^{w/2}}.
\]
Therefore,
\begin{equation}\label{ineq-loglam-upp}
\log \varLambda<-\frac{\log d}{2}\,w+\log 6,
\end{equation}
where 
\[
\ \ \ \varLambda=w \log d - z \log c \ \ (>0).
\]

To obtain a lower bound for the left-hand side of \eqref{ineq-loglam-upp}, we rely on the following result of Laurent which is a particular case of \cite[Corollary 2;\,$(m,C_2)=(10,25.2)$]{La}:

\begin{prop}\label{prop-comp}
Let $\alpha_1$ and $\alpha_2$ be positive rational numbers with both greater than $1.$
Assume that $\alpha_1$ and $\alpha_2$ are multiplicatively independent.
Let $H_1$ and $H_2$ be positive numbers such that
\[
H_j \ge \max \{ \,{\rm h}(\alpha_j),\,\log \alpha_i,\, 1\,\}, \quad j=1,2.
\]
Then, for any positive integers $b_1$ and $b_2,$
\[
\log\big|\,b_2 \log {\alpha_2} - b_1 \log {\alpha_1}\,\big|>-25.2\,H_1\,H_2\,\Bigr(\max \{\, \log b'+0.38,\,10\,\}\Bigr)^2
\]
with $b'=b_1/H_2+b_2/H_1.$
\end{prop}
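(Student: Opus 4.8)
The plan is to obtain the statement above as a direct specialization of Laurent's lower bound for linear forms in two logarithms, \cite[Corollary 2]{La}; essentially all of the work is choosing the parameters correctly and checking that our normalization matches Laurent's. That result is phrased for two multiplicatively independent real algebraic numbers of degree at most $D$ over $\mathbb{Q}$, it carries a free integer parameter $m$ together with a constant $C_2=C_2(m)$ tabulated there, and the choice $m=10$ gives $C_2=25.2$.

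First I would put $D=1$, which is legitimate since $\alpha_1,\alpha_2\in\mathbb{Q}$. With this choice the factor $D^4$ in Laurent's bound equals $1$; his quantity $b'=b_1/(D\log A_2)+b_2/(D\log A_1)$ becomes $b'=b_1/H_2+b_2/H_1$ once one sets $A_j=\exp(H_j)$; and his admissibility condition $\log A_j\ge\max\{{\rm h}(\alpha_j),\,|\log\alpha_j|/D,\,1/D\}$ becomes $H_j\ge\max\{{\rm h}(\alpha_j),\,|\log\alpha_j|,\,1\}$. Since $\alpha_1,\alpha_2>1$ by hypothesis, $|\log\alpha_j|=\log\alpha_j$, so this is precisely the condition $H_j\ge\max\{{\rm h}(\alpha_j),\,\log\alpha_j,\,1\}$ imposed in the statement. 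The multiplicative-independence hypothesis is common to both, and it is exactly what makes $\varLambda:=b_2\log\alpha_2-b_1\log\alpha_1$ nonzero, so that $\log|\varLambda|$ is meaningful. Substituting $D=1$ and $m=10$ (hence $C_2=25.2$) into Laurent's inequality then yields $\log|\varLambda|>-25.2\,H_1H_2\,\bigl(\max\{\log b'+0.38,\,10\}\bigr)^2$, the numerical constant $0.38$ inside the maximum, the entry $10=m/D$, and the coefficient $25.2=C_2(10)$ all being read off from \cite[Corollary 2]{La} under this specialization.

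I do not expect any genuine mathematical obstacle here — the content is Laurent's theorem — and what remains is careful bookkeeping: the reduction $D=1$, the cross-definition of $b'$, the height normalization, the exact shape of the maximum (in particular that no entry of Laurent's maximum other than $\log b'+0.38$ and $m/D=10$ survives for $D=1$), and, above all, that the two numerical constants are transcribed correctly from the table in \cite{La}. It is also worth recording in one sentence that the proposition will be applied to \eqref{ineq-loglam-upp} with $\alpha_1=c=N+1$, $\alpha_2=d=N+2$, $b_1=z$ and $b_2=w$, and that $N+1$ and $N+2$ are multiplicatively independent because two consecutive integers exceeding $1$ cannot both be powers of one common base; so the hypotheses are met in the intended application.
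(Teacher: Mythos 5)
Your proposal matches the paper exactly: the paper offers no independent proof, but simply states the proposition as the particular case $(m,C_2)=(10,25.2)$ of \cite[Corollary 2]{La}, and your bookkeeping (taking $D=1$, identifying $H_j$ with $\log A_j$, and reading off $b'$, the constant $0.38$, and the entry $10=m/D$) is precisely the specialization the paper has in mind. The transcription of the hypotheses and constants is correct, so there is nothing to add.
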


We apply Proposition \ref{prop-comp} with the following parameters:
\[
\alpha_1=c, \ \alpha_2=d, \ b_1=z, \ b_2=w, \ H_1=\log c, \ H_2= \log d.
\]
We have
\begin{equation}\label{ineq-loglam-low}
\log \varLambda > -25.2 \,\log c\,\log d\, \biggl(\,\max \biggl \{ \log \Bigr( \frac{z}{\log d}+\frac{w}{\log c} \Bigr) +0.38, \,10\biggl\} \,\biggl)^2.
\end{equation}
Since $\frac{z}{\log d}<\frac{w}{\log c}$ as $\varLambda>0$, inequalities \eqref{ineq-loglam-upp} and \eqref{ineq-loglam-low} together yield
\[
s <50.4\,\Bigr( \max\bigr\{\! \log (2s) +0.38,\,10 \bigr\} \Bigr)^2
+\frac{2\log 6}{\log c\,\log d}
\]
with 
\[
s=\frac{w}{\log c}.
\]
This implies that $s<5042$.
Finally, this together with inequality \eqref{ineq-w-low} gives
\[
\frac{1}{\mu_x+\mu_z}\,N < 5042\log (N+1),
\]
implying that $e \le 8$.
\if0
for e in [8..15] do 
 N:=4^e;
 muz:=Log(N+2)/Log(N+1);
 mux:=Log(N+2)/Log(N-1);
 e, N/(muz+mux) - 5043*Log(N+1);
end for;
\fi

\section{Cases with small base numbers}\label{sec-5}

By the contents of previous sections, to complete the proof of Theorem \ref{th-3456}, it suffices to  solve equation \eqref{eq-e} for each $e$ with $e=1,2,\ldots,8$.
The corresponding equations are
\begin{align}
&3^x+4^y+5^z=6^w;\label{3456}\\
&15^x+16^y+17^z=18^w;\label{15161718}\\
&63^x+64^y+65^z=66^w;\\
&255^x+256^y+257^z=258^w;\\
&1023^x+1024^y+1025^z=1026^w;\\
&4095^x+4096^y+4097^z=4098^w;\\
&16383^x+16384^y+16385^z=16386^w;\\ 
&65535^x+65536^y+65537^z=65538^w,
\end{align}
respectively.
For handling these, we use the algorithm of Bert\'ok and Hajdu \cite[Sec.\,3]{BerHa} which is efficient for handling a given purely exponential Diophantine equation.
Indeed, their algorithm frequently succeeds in finding some integer $M$ with the property that reducing the equation in question modulo $M$ reveals that at least one of the unknown exponents of it should be bounded, which often leads us to obtain the non-existence of solutions.
It turns out that the algorithm found appropriate moduli as in Table 1.
\begin{table}[h] \label{table1}
 \begin{center}
\caption{Moduli to \eqref{eq-e} with $1 \le e \le 8$.}
\begin{tabular}{ccc} \hline
   equation & modulus & output    \\ \hline
   (5.1) & $2^4 \cdot 3^3 \cdot 7\cdot13\cdot73$ & $w \le 3$ \\
   (5.2) & $2^2\cdot7\cdot13\cdot19\cdot37\cdot73$ &  no solution\\
   (5.3) & $2^2\cdot13\cdot37\cdot73$ &  no solution \\
   (5.4) &$2^2\cdot7\cdot13\cdot19\cdot37\cdot73$ &   no solution \\
   (5.5) &$2^2\cdot7\cdot13\cdot19\cdot37\cdot73$&   no solution \\
   (5.6) &$2^2\cdot37\cdot73\cdot163\cdot433\cdot1297$&   no solution \\
   (5.7) &  $2^2\cdot7\cdot13\cdot19\cdot37$ & no solution \\      
   (5.8) &$2^2\cdot7\cdot13\cdot19\cdot37\cdot73$  &  no solution \\ \hline
\end{tabular}
 \end{center}
\end{table}

From this table we can easily conclude that \eqref{3456}, which is already solved in \cite{AlFo2}, has only the solutions $(x,y,z,w)=(3,1,1,2),(3,3,3,3)$ and that each of the other equations has no solution already modulo some integer $M$.
Below, we shall briefly explain why the choice of $M=2^4 \cdot 3^3 \cdot 7\cdot13\cdot73$ is appropriate for \eqref{3456} as in the above table.

According to some of the values of unknown exponents, we choose moduli to find an upper bound for $w$, as in Table 2.
\begin{table}[h] \label{table2}
 \begin{center}
\caption{Moduli to \eqref{3456} with output.}
\begin{tabular}{ccc} \hline
   exponent restriction & modulus & output    \\ \hline
   $x=2$ & $2^2$  & $w \le 1$ \\
   $x \ne 2,\,y=1$ & $2^3\cdot3$ & $w \le 2$ \\
   $x=1,\,y \ge 2$ & $2^3\cdot3$ & $w \le 2$ \\     
   $x \ge 3,\,y \ge 2$ &$2^4 \cdot 3^3 \cdot 7\cdot13\cdot73$  &  $w \le 3$ \\ \hline
\end{tabular}
 \end{center}
\end{table}

We will use the following notation: 
\[
(x_1,x_2,\ldots,x_k) \equiv (y_1,y_2,\ldots,y_k) \mod{(m_1,m_2,\ldots,m_k)}
\]
which stands for $x_i \equiv y_i \pmod{m_i}$ for $i=1,2,\ldots,k$.

\bigskip {\it Case where $x=2$ and $y=1.$} 
We have 
\[
13+5^z=6^w.
\]
Reducing this modulo $4$ gives that $2 \equiv 2^w \pmod{4}$, so that $w \le 1$.

\bigskip {\it Case where $x \ne 2$ and $y=1.$}
Reducing the equation modulo $3$ gives that $1+(-1)^z \equiv 0 \pmod{3}$, so that $z \equiv 1 \pmod{2}$.
Also, reducing it modulo $8$ gives that $3^x+4+5^z \equiv 6^w \pmod{8}$, so that $w \le 2$ or $3^x+4+5^z \equiv 0 \pmod{8}$.
The congruence in the latter case implies that
\[
(x,z) \equiv (1,0) \mod{(2,2)}.
\]
Putting all found restrictions together implies that $w \le 2$.

\bigskip{\it Case where $x=2$ and $y \ge 2$}.
We have
\[
9+4^y+5^z=6^w.
\]
This implies that $w \le 1$ in the same way as that in case where $(x,y)=(2,1)$.

\bigskip {\it Case where $x=1$ and $y \ge 2$}.
We have
\[
3+4^y+5^z=6^w.
\]
Reducing this equation modulo $3^2$ implies that $w \le 1$ or $3+4^y+5^z \equiv 0 \pmod{9}$.
The congruence in the latter case implies that
\begin{equation} \label{x1yge2-cond-mod9}
(y,z) \equiv (0,1),(1,5),(2,3) \mod{(3,6)}.
\end{equation}
Also, considering it with modulus $7$ shows that $3+4^y+5^z \equiv (-1)^w \pmod{7}$, which implies that
\begin{equation} \label{x1yge2-cond-mod7}
\begin{split}
(y,z,w) \equiv\, & (0,2,1),(1,0,0),(2,5,0),\\
&(0,4,1),(1,3,1),(2,0,1) \mod{(3,6,2)}.
\end{split}
\end{equation}
Putting restrictions \eqref{x1yge2-cond-mod9} and \eqref{x1yge2-cond-mod7} together implies that there is no suitable pair $(y,z)$.
Thus, $w \le 1$.

\bigskip

{\it Case where $x \ge 3$ and $y \ge 2$}.
Here we choose 5 moduli $M$ in turn. 
First, we set $M=2^4$.
Then $w \le 3$ or $3^x+5^z \equiv 0 \pmod{16}$. 
The congruence in the latter case implies that
\begin{equation} \label{xge3yge2-cond-mod16}
(x,z) \equiv (1,3),(3,1) \mod{(4,4)}.
\end{equation}
Next, we set $M=7$, and $3^x+4^y+5^z \equiv (-1)^w \pmod{7}$.
This implies that
\begin{equation} \label{xge3yge2-cond-mod7}
\begin{split}
(x,y,z,w)  \equiv \,&( 0, 0, 2, 1 ),    ( 0, 0, 3, 0 ),    ( 0, 1, 0, 1 ),    ( 0, 1, 5, 0 ),    ( 0, 2, 1, 0 ),\\
   & ( 0, 2, 5, 1 ),    ( 1, 0, 2, 0 ),    ( 1, 0, 4, 1 ),    ( 1, 1, 0, 0 ),    ( 1, 1, 3, 1 ),\\
    &( 1, 2, 0, 1 ),    ( 1, 2, 5, 0 ),    ( 2, 0, 1, 0 ),    ( 2, 0, 5, 1 ),   ( 2, 1, 4, 0 ),\\
   & ( 2, 2, 2, 0 ),    ( 2, 2, 4, 1 ),    ( 3, 0, 0, 0 ),    ( 3, 0, 3, 1 ),    ( 3, 1, 1, 0 ),\\
&    ( 3, 1, 5, 1 ),    ( 3, 2, 1, 1 ),    ( 4, 0, 0, 1 ),    ( 4, 0, 5, 0 ),    ( 4, 1, 1, 1 ),\\
 &   ( 4, 2, 4, 0 ),     ( 5, 0, 4, 0 ),    ( 5, 1, 2, 1 ),    ( 5, 1, 3, 0 ),    ( 5, 2, 0, 0 ),\\
&    ( 5, 2, 3, 1 )
\mod{(6,3,6,2)}.
\end{split}
\end{equation}
Putting restrictions \eqref{xge3yge2-cond-mod16} and \eqref{xge3yge2-cond-mod7} together implies that $w \le 3$ or 
\begin{equation} \label{xge3yge2-cond-H1}
(x,y,z,w) \in H_1 \mod{(12,3,12,2)}
\end{equation}
for some set $H_1$ composed of 11 elements. 
We continue this process with $M=3^3, 13,73$ in turn.
The derived restriction on $(y,z)$ modulo $(9,18)$ by choice $M=3^3$ together with \eqref{xge3yge2-cond-H1} implies that $(x,y,z,w) \in H_2 \pmod{(12,9,36,2)}$ for some set $H_2$ of 18 elements.
Further, taking $M=13$ leads to $(x,y,z,w) \in H_3 \pmod{(12,18,36,12)}$ with $|H_3|=15$.
Finally, taking $M=73$ leads to the non-existence of $(x,y,z,w)$, so that $w \le 3$.

\bigskip

\subsection*{Acknowledgements}
The author is indebted to Lajos Hajdu for his kind help to use the algorithm of paper \cite{BerHa} in Section \ref{sec-5}.

\end{document}